\documentclass[11pt]{amsart}

\usepackage{geometry}
\usepackage{amssymb}
\usepackage{mathtools}
\usepackage{xcolor}
\usepackage{tikz-cd}
\usepackage[colorlinks=true,citecolor=red,linkcolor=blue,urlcolor=red]{hyperref}

\DeclareMathOperator{\Gr}{Gr}
\DeclareMathOperator{\Sing}{Sing}
\DeclareMathOperator{\length}{length}
\DeclareMathOperator{\codim}{codim}
\DeclareMathOperator{\Hess}{Hess}
\newcommand{\PP}{\mathbb{P}}
\newcommand{\CC}{\mathbb{C}}
\newcommand{\cO}{\mathcal O}
\newcommand{\cP}{\mathcal P}
\newcommand{\cH}{\mathcal H}
\newcommand{\dual}[1]{(#1)^{\vee}}

\newtheorem{thm}{Theorem}[section]
\newtheorem{lem}[thm]{Lemma}

\theoremstyle{definition}

\newtheorem{rem}[thm]{Remark}

\numberwithin{equation}{section}

\begin{document}

\title[Twelve common flex lines in a general pencil of cubics]{Twelve common flex lines in a general pencil of cubics}

\author{Jihao Liu}

\address{Department of Mathematics, Peking University, No. 5 Yiheyuan Road, Haidian District, Beijing 100871, China}
\address{Beijing International Center for Mathematical Research, Peking University, No. 5 Yiheyuan Road, Haidian District, Beijing 100871, China}
\email{liujihao@math.pku.edu.cn}

\author{Yanze Wang}
\address{Academy of Mathematics and Systems Science, Chinese Academy of Sciences, No. 55 Zhonguancun East Road, Haidian District, Beijing, 100190, China}
\email{wangyanze@amss.ac.cn}

\subjclass[2020]{14H50, 14N10, 14H10}
\keywords{Pencil of plane curves, flex line, common flex line, contact invariant}
\date{\today}

\begin{abstract}
  We prove that a general pencil of plane cubics over $\mathbb C$ has exactly $12$ common flex lines. This answers a question of Ciliberto, Miranda, and Ro\'e. The main result of this paper was obtained using generative AI, particularly ChatGPT 5.5 Pro and the Danus system.
\end{abstract}

\maketitle

\section{Introduction}\label{sec:introduction}

We work over the field of complex numbers $\CC$.

Flexes of plane curves are among the oldest objects of projective algebraic geometry. For a single smooth plane cubic, the nine flexes are governed by the group law; for a pencil, the flex tangent lines vary with the member and sweep out a curve in the dual projective plane. The geometry of this dual curve records how inflectional data degenerates and how distinct members of the pencil come to share a tangent line.

Let $\cP$ be a general pencil of plane curves of degree $d$, and let
\[
  F_{\cP,L}\subset\dual{\PP^2}
\]
denote its \emph{flex-line curve}: the closure, in the dual projective plane, of the set of lines occurring as flex tangent lines at smooth points of members of $\cP$. Here a \emph{flex tangent line} of a member $C$ at a smooth point $p$ is a line $L$ with local intersection multiplicity $I_p(L,C)\ge 3$. A \emph{common flex line} of $\cP$ is a line $L\in\dual{\PP^2}$ which is a flex tangent line at a smooth point to two distinct members of $\cP$.

Ciliberto, Miranda, and Ro\'e \cite[Propositions~1.1 and~1.2]{CMR26} computed the principal numerical invariants of $F_{\cP,L}$. For a general degree-$d$ pencil,
\begin{equation}\label{eq:cmr-degree-genus}
  \deg(F_{\cP,L})=3d(d-2),\qquad p_g(F_{\cP,L})=12d^2-39d+25,
\end{equation}
where $p_g$ denotes the geometric genus, and the hyperflex invariant is
\begin{equation}\label{eq:cmr-hyperflex}
  \deg(\cH_{(4),d})=6(d-3)(3d-2),
\end{equation}
where $\cH_{(4),d}$ is the locus of degree-$d$ plane curves admitting a line of contact order at least $4$. For $d=3$, the flex-line curve of a general cubic pencil thus has degree $9$ and geometric genus $16$, while the hyperflex invariant vanishes. Ciliberto--Miranda--Ro\'e then asked \cite[Remark~5.3]{CMR26} whether a general cubic pencil has $12$ common flex lines. The purpose of this paper is to answer this question affirmatively.

\begin{thm}\label{thm:main}
  A general pencil $\cP$ of plane cubics over $\CC$ has exactly $12$ common flex lines.
\end{thm}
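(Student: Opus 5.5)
The plan is to obtain the number $12$ as the total $\delta$-invariant of the flex-line curve $F_{\cP,L}$, and then show that every unit of $\delta$ comes from an ordinary node corresponding to a common flex line. For $d=3$, \eqref{eq:cmr-degree-genus} gives $\deg F_{\cP,L}=9$ and $p_g(F_{\cP,L})=16$. A plane curve of degree $9$ has arithmetic genus $\binom{8}{2}=28$. If $F_{\cP,L}$ is reduced and irreducible, which I will take from \cite{CMR26} (irreducibility is implicit in the genus formula there), then
\[
  \sum_{q\in\Sing F_{\cP,L}}\delta_q \;=\; 28-16 \;=\; 12 .
\]
So the theorem reduces to two claims. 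First, every common flex line is an ordinary node of $F_{\cP,L}$ and contributes $\delta=1$. Second, $F_{\cP,L}$ has no other singular points.

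To analyse the singularities I will use the incidence curve
\[
  \Phi=\overline{\{(p,t): p \text{ a smooth flex of } C_t\}}\subset \PP^2\times\PP^1 ,
\]
cut out by the pencil equation and the Hessian condition. There is a map $\tau\colon\Phi\to\dual{\PP^2}$ sending $(p,t)$ to the tangent line $T_pC_t$, and its image is $F_{\cP,L}$. The steps, in order, are as follows.

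\textbf{Step 1.} Show that $\Phi$ is smooth for a general pencil and that $\tau$ is birational onto its image. This is a Bertini and dimension count in the space of pencils, the same setting in which \cite{CMR26} computes the genus.

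\textbf{Step 2.} Show that $\tau$ is an immersion at every point of $\Phi$, so that $F_{\cP,L}$ has no cusps or worse unibranch singularities. At points lying over smooth members of the pencil, a vanishing differential would force a contact of order $4$ or a degenerate ("stationary") flex. The first is excluded because $\deg\cH_{(4),3}=0$ by \eqref{eq:cmr-hyperflex}. The second is excluded by a codimension count on the parameter space of pencils.

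\textbf{Step 3.} Classify the possible double points of $F_{\cP,L}$, meaning lines $L$ with two points of $\Phi$ over them. A line meets a cubic in only $3$ points, so $L$ cannot be a flex tangent at two distinct points of the same smooth member. Hence two preimages must lie on distinct members, and $L$ is a common flex line. I will then show by a parameter count that a general pencil has no line that is a flex tangent to three members. I will also show that the two branches at a common flex line have distinct tangent directions in $\dual{\PP^2}$, so the point is an ordinary node.

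The main obstacle is the behaviour over the $12$ nodal members of the pencil and near the $9$ base points, where the closure of $\Phi$ picks up limit points that are not flexes at smooth points. For a nodal cubic $C_{t_0}$, three of its flexes are smooth. The other six flexes of nearby members collapse into the node, and their flex lines tend to the two branch tangents. I must check that these limit lines are smooth points of $F_{\cP,L}$ and do not give hidden $\delta$. Since the total is already forced to be $12$, a nonzero contribution here would contradict the count in the theorem, but it still has to be verified. To do this I will take local coordinates $xy+\text{(cubic terms)}+s\cdot g=0$ with $g(0)\neq0$ and compute Puiseux expansions of the nearby flexes in $s$. I expect them to form two smooth branches of $\Phi$, each ramified of order $3$ over $t$, mapping to the two branch tangents. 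The task is then to verify that $\tau$ is an unramified injection there, so that these points are smooth on $F_{\cP,L}$. I would do this computation on an explicit normal form, because at such a point $\tau$ being an immersion is not automatic. A similar but easier local check is needed at the base points, to confirm that no two members have a common flex line passing through a base point.

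Once Steps 1--3 and these local checks are done, all $12$ units of $\delta$ sit at ordinary nodes, each of which is a common flex line. Hence there are exactly $12$ common flex lines.
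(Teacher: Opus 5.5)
\textbf{Verdict.} Your strategy is the paper's, but Step~1 as stated is false and has to be repaired.

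\textbf{Comparison with the paper.} You obtain $\sum_q\delta_q=28-16=12$ from $\deg F_{\cP,L}=9$ and $p_g=16$. You then show that every singular point is an ordinary node coming from a common flex line, and conversely. The paper runs the local analysis through the twisted cubic $T_L\subset\PP(R_L)$:
\begin{itemize}
\item The tangent-line map ramifies at a smooth flex datum exactly when $\PP(W_L)$ is tangent to $T_L$ at $[\ell_p^3]$. This means the other generator of the pencil restricts to $L$ with a double root at $p$, and it is excluded by the count $12+3=15<16$.
\item Triple points are impossible for every $W\in G^\circ$, because $T_L$ has no trisecants.
\item Nodes are secants of $T_L$.
\end{itemize}
Your direct parameter counts can do the same job. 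In your Step~2, the ``contact of order $4$'' alternative is vacuous for cubics, since it would mean $L\subset C$. Vanishing of $d\tau$ is a condition on the pencil direction, not on $C$. The paper also proves irreducibility (Lemma~\ref{lem:irreducible}) with a Bertini-type theorem for $X^\circ\to\PP(V)$, rather than taking it from CMR.

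\textbf{The problem with Step~1.} $\Phi\subset\PP^2\times\PP^1$ is \emph{not} smooth for a general pencil, so Step~1 cannot be proved.
\begin{itemize}
\item On the total space $S=\{(p,t):p\in C_t\}\cong\mathrm{Bl}_9\PP^2$, $\Phi$ is the Hessian divisor.
\item In the model $f_t=xy+ax^3+by^3+t$ near a node one gets exactly $\Hess(F_t)|_S=8xy\bigl(1-27ab(xy+ax^3+by^3)\bigr)$.
\item So $\Phi$ has an ordinary node at the node of each of the $12$ singular members. These are the two branches you describe later, and they pass through the same point of $\PP^2\times\PP^1$.
\item This is forced numerically. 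On $S$ one has $\Phi\sim 12H-3\sum E_i$, which gives $p_a(\Phi)=28$, so a smooth $\Phi$ would have genus $28$, not $16$.
\item Also, $\tau$ is undefined at these points, since a node has no tangent line.
\end{itemize}

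\textbf{The fix.} Work on the normalization of $\Phi$, or, as the paper does, on $\Gamma_W\subset\PP(W)\times\Psi$. Because $\Gamma_W$ records the line, it splits each such node into the two points $(t_0,L_1,p_0)$ and $(t_0,L_2,p_0)$, and $\Gamma_W$ is smooth there: the three flex conditions have a rank-$3$ Jacobian. With this replacement your nodal-member computation gives exactly what is needed. Each branch maps immersively to its branch tangent, with $t$ ramified of order $3$ along it. Steps~2--3 then go through on the normalization.

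\textbf{Base points.} These need no local analysis. Every member is smooth at a base point $b$. A common flex line through $b$ would force both flex points to equal $b$, hence $\rho_L(C_1)\propto\rho_L(C_2)$, which contradicts $W\in G^\circ$.
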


The number $12$ is the singularity defect of the degree-$9$ flex-line curve: a plane curve of degree $9$ has arithmetic genus $\binom{8}{2}=28$, so \eqref{eq:cmr-degree-genus} forces a total delta-invariant equal to $28-16=12$. The content of Theorem~\ref{thm:main} is the geometric identification of that defect: for a general pencil, every singularity of the flex-line curve is an ordinary node, and each ordinary node is a common flex line.

A priori the defect could be carried by cusps, by ramification of the tangent-line map, by hyperflexes, by trisecant coincidences, or by tangent data coming from singular members of the pencil. Ruling out each of these possibilities, and matching the remaining ordinary nodes with common flex lines, is the heart of the proof.

The mechanism that makes this matching possible is the restriction of the pencil to lines.
Restricting $\cP$ to a line $L$ yields a pencil $\PP(W_L)$ of binary cubics in $\PP(R_L)\cong\PP^3$. The triple-root binary cubics form a twisted cubic $T_L$, and $L$ is a flex tangent line of a member $C$ at a smooth point $p$ precisely when $\PP(W_L)$ meets $T_L$ at $[\ell_p^3]$. Since $T_L$ has no trisecant lines and its secant variety admits a transparent stratification, the singularities of $F_{\cP,L}$ are governed by the elementary geometry of lines meeting $T_L$. The vanishing of the hyperflex invariant \eqref{eq:cmr-hyperflex} for $d=3$ removes the only stratum that could otherwise produce nonnodal singularities.

\subsection*{Outline of the proof}
The argument has three parts. In Section~\ref{sec:preliminaries} we record the geometry of lines meeting a twisted cubic in the projective space of binary cubics, and we set up the restriction of a pencil of cubics to a varying line. In Section~\ref{sec:flex-line-curve} we prove the genericity statements: for a general pencil the flex-line curve is irreducible (Lemma~\ref{lem:irreducible}), and every one of its singularities is an ordinary node whose two branches are flex data at distinct smooth points of distinct members sharing one tangent line (Lemma~\ref{lem:local-structure}). In Section~\ref{sec:proof} we combine the numerical input from \eqref{eq:cmr-degree-genus} and \eqref{eq:cmr-hyperflex} (restated in Lemmas~\ref{lem:genus} and~\ref{lem:numerical}) with the local structure to compute the genus defect and conclude that there are exactly $12$ common flex lines.

\begin{rem}
The main result of this paper was obtained using generative AI, particularly ChatGPT 5.5 Pro and the Danus system, a specialized agent built on the Rethlas system and substantially more capable of conducting fundamental mathematical research. Human verification and polishing were done afterwards. See \cite{Liu+26} and \cite{Ju+26} for detailed introductions to the Danus system and the Rethlas system, respectively. Due to the limitation of generative AI, it is possible that we have missed some related references in the literature, and we welcome any comments from experts.
\end{rem}

\subsection*{Acknowledgements}
The first author was partially supported by the National Key R\&D Program of China \#\allowbreak 2024YFA1014400. The first author would like to thank the Rethlas team, namely Haocheng Ju, Jiedong Jiang, Shurui Liu, Guoxiong Gao, Yuefeng Wang, Zeming Sun, Bin Wu, Liang Xiao, and Bin Dong, for their contributions to the development of Rethlas and its customized version used for the problem studied in this paper. The first author would like to thank Ruochuan Liu and Gang Tian for constant support and encouragement. The second author would like to thank Yifei Chen for constant support.

\section{Lines, binary cubics, and the twisted cubic}\label{sec:preliminaries}

In this section, We first recall the basic notation concerning the flex incidence curve and the flex-line curve.

\subsection*{Flex-line curve}

Write $B=\dual{\PP^2}$ for the dual plane, whose points are the lines $L\subset\PP^2$, and set $V=H^0(\PP^2,\cO_{\PP^2}(3)).$ Thus $\PP(V)$ parametrizes plane cubic curves.
Let $\Psi=\{(L,p)\in B\times\PP^2:p\in L\}$ be the point-line incidence variety.

For a two-dimensional subspace $W\subset V$ and the corresponding pencil $\cP=\PP(W)=\{C_t:t\in\PP(W)\}$, define its \emph{flex incidence curve}
\[
  \Gamma_W=\{(t,L,p)\in\PP(W)\times\Psi:I_p(L,C_t)\ge 3\}.
\]
The tangent-line projection
\[
  \lambda_W\colon\Gamma_W\longrightarrow B,\qquad (t,L,p)\longmapsto L,
\]
has image whose reduced closure is the flex-line curve $F_{\cP,L}$. Thus $\Gamma_W$ remembers the member and contact point that are forgotten by $F_{\cP,L}$. We write $\Gamma_W^\circ$ for the open locus on which $p$ is a smooth point of $C_t$.

\[
  \begin{tikzcd}[column sep=large, row sep=large]
    \Gamma_W
    \arrow[r, hook]
    \arrow[d, "\lambda_W"']
    & \cP\times\Psi
    \arrow[r, hook]
    & \PP(V)\times\Psi
    \arrow[d, "\operatorname{pr}_B\circ\operatorname{pr}_{\Psi}"] \\
    F_{\cP,L}
    \arrow[rr, hook]
    &
    & B .
  \end{tikzcd}
\]

\subsection*{Twisted cubic}
Then we record the linear-algebraic model that underlies the proof of Theorem~\ref{thm:main}.

Let $G=\Gr(2,V)$ be the Grassmannian of two-dimensional subspaces $W\subset V$. Since $\dim_\CC V=10$, we have $\dim G=16$. A point $W\in G$ determines a pencil $\cP=\PP(W)$ of plane cubics. We say a property holds for a \emph{general pencil} if it holds for all $W$ in a nonempty Zariski open subset of $G$.

For a line $L \in B$, let $R_L=H^0(L,\cO_L(3))$ be the four-dimensional space of binary cubics on $L$, and let $\rho_L\colon V\to R_L$ be the restriction map of cubic forms to $L$. The map $\rho_L$ is surjective for every $L$, with kernel
\[
  B_L=\{s\in V:s\text{ is divisible by a linear equation of }L\},
\]
of dimension $6$, so that $V/B_L\cong R_L$. As $L$ varies, the spaces $R_L$ form a rank-four vector bundle $R$ on $B$.

A nonzero binary cubic on $L$ has a triple root exactly when it is the cube of a nonzero linear form. The classes of such cubics form the twisted cubic
\[
  T_L=\{[\ell^3]:0\neq\ell\in H^0(L,\cO_L(1))\}\subset\PP(R_L)\cong\PP^3.
\]
Concretely, after a choice of coordinates $L\cong\PP^1$ and $R_L\cong H^0(\PP^1,\cO(3))$, this is the image of the map $[\alpha:\beta]\mapsto[(\alpha X+\beta Y)^3]$.

Moreover, there is a relative Grassmann bundle associated to the rank-four bundle $R$
\[
  \pi_{\Gr} : \Gr_B(2,R)\longrightarrow B.
\]
Its fiber over $L\in B$ is the Grassmannian $\Gr(2,R_L)$ of two-dimensional vector subspaces $M\subset R_L$. Thus a point of $\Gr_B(2,R)$ is a pair $(L,M)$, where
\[
  M\subset H^0(L,\cO_L(3)),\qquad \dim M=2,
\]
and its projectivization $\PP(M)$ is a projective line in $\PP(R_L)\cong\PP^3$.

Since $\rho_L\colon V\to R_L$ is surjective, a pencil $W\in G$ whose members contain no line component satisfies $W\cap B_L=0$ for every $L$, and then $\rho_L$ maps $W$ isomorphically onto a two-dimensional subspace
\[
  W_L:=\rho_L(W)\subset R_L .
\]
Let $G^\circ\subset G$ be the open subset of pencils with no member containing a line component. This is nonempty and open: the cubics with a line component form a subvariety of dimension $2+5=7$ in $\PP(V)\cong\PP^9$ (choose the line, then the residual conic), hence of codimension $2$, so a general projective line $\PP(W)$ avoids it. For $W\in G^\circ$, the assignment $L\mapsto W_L$ defines a section
\[
  \sigma_W\colon B\longrightarrow\Gr_B(2,R)
\]
of the relative Grassmann bundle of two-dimensional subspaces of $R$, and the membership $L\in F_{\cP,L}$ is governed by whether the line $\PP(W_L)\subset\PP(R_L)$ meets $T_L$.

More precisely, we have following Lemmas.

\begin{lem}\label{lem:flex-meets-twisted}
  Let $W\in G^\circ$ and $L\in B$. A member $C\in\PP(W)$ has $L$ as a flex tangent line at a smooth point $p\in L$ if and only if the binary cubic $\rho_L(C)$ has a triple root at $p$, that is, $[\rho_L(C)]\in T_L$ and $p$ is a smooth point of $C$. In particular, $L\in F_{\cP,L}$ if and only if the line $\PP(W_L)\subset\PP(R_L)$ meets $T_L$ at a class arising from a smooth point of the corresponding member.
\end{lem}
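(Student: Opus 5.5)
The plan is to reduce everything to a local computation on $L$. Since $I_p(L,C)$ is defined by restricting an equation of $C$ to $L$, the statement should follow once we know that $\rho_L(C)$ is a nonzero binary cubic.

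First, $W\in G^\circ$, so no member $C$ contains $L$. Hence $\rho_L(C)\neq 0$, and $C\cap L$ is a finite scheme of length $3$ on $L$. Choose affine coordinates with $L=\{y=0\}$ and $p$ the origin, and let $f(x,y)$ be a local equation of $C$ (a dehomogenized representative of $C$). Then
\[
I_p(L,C)=\dim_\CC \cO_{\PP^2,p}/(y,f)=\operatorname{ord}_{x=0} f(x,0).
\]
This is exactly the order of vanishing at $p$ of the binary cubic $\rho_L(C)$, viewed as a section of $\cO_L(3)$. So $I_p(L,C)\ge 3$ if and only if $\rho_L(C)$ vanishes to order at least $3$ at $p$. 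A nonzero binary cubic has degree $3$, so this happens if and only if $\rho_L(C)=c\,\ell_p^3$, where $c\neq 0$ and $\ell_p$ is the linear form on $L$ vanishing at $p$. In other words, $[\rho_L(C)]=[\ell_p^3]\in T_L$.

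Second, by the paper's definition, a flex tangent line at a smooth point requires $p$ to be a smooth point of $C$ in addition to $I_p(L,C)\ge3$. Combining this with the first step gives the stated equivalence. For a smooth point $p$, the condition $I_p(L,C)\ge 2$ already forces $L$ to be the tangent line, so nothing more is needed.

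Third, for the ``in particular'' statement, note that $\rho_L|_W\colon W\to W_L$ is an isomorphism. So members $C\in\PP(W)$ correspond bijectively to points $[\rho_L(C)]$ of the line $\PP(W_L)$. A point of $\PP(W_L)\cap T_L$ has the form $[\ell_p^3]$ for a unique $p\in L$, because $\ell\mapsto[\ell^3]$ is injective on $\PP(H^0(\cO_L(1)))$. By the first two steps, $L$ is a flex tangent line of some member at a smooth point exactly when such an intersection point arises from a smooth point of the corresponding member.

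There is one remaining point: $F_{\cP,L}$ was defined as a closure. I will read the ``in particular'' statement as describing the set of lines actually occurring as flex lines, which is the set whose closure is $F_{\cP,L}$. I do not expect a real obstacle here. The only care needed is the identification of the intersection multiplicity with the vanishing order of the restriction, which uses that $L\not\subset C$.
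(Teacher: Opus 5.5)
Your proof is correct and is essentially the paper's argument. You identify $I_p(L,C)$ with the vanishing order at $p$ of the binary cubic $\rho_L(C)$, which is nonzero because $W\in G^\circ$, deduce $[\rho_L(C)]=[\ell_p^3]\in T_L$, and transfer this to $\PP(W_L)$ via the isomorphism $\rho_L|_W\colon W\to W_L$. Your caveat about the closure is a genuine point the paper's proof passes over: the ``in particular'' clause is literally correct only for lines actually occurring as smooth flex tangent lines. The branch tangent lines at nodes of singular members lie in $F_{\cP,L}$, as the paper itself uses in Lemma~\ref{lem:local-structure}, yet the corresponding point of $\PP(W_L)\cap T_L$ comes from the node rather than a smooth point.
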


\begin{proof}
  The restriction $\rho_L(C)=C|_L$ is, as a binary cubic, the divisor $C\cap L$ counted with multiplicity. The local intersection multiplicity $I_p(L,C)$ equals the order of vanishing of $C|_L$ at $p$. Thus $I_p(L,C)\ge 3$ if and only if $C|_L$ vanishes to order $3$ at $p$, that is, has a triple root at $p$, which is exactly the condition $[C|_L]\in T_L$ with triple root $p$. Since $\rho_L|_W$ is injective for $W\in G^\circ$, the line $\PP(W_L)$ is the image of $\PP(W)$, and $\PP(W_L)$ meets $T_L$ precisely when some member of $\PP(W)$ restricts to a triple-root binary cubic on $L$. The smoothness of $C$ at $p$ is the requirement that $p$ be a smooth flex point rather than a singular point of $C$.
\end{proof}

\begin{lem}\label{lem:twisted-cubic}
  The twisted cubic $T_L\subset\PP(R_L)$ has the following properties.
  \begin{enumerate}
    \item No projective line in $\PP(R_L)$ contains three distinct points of $T_L$; that is, $T_L$ has no trisecant lines.
    \item A projective line $M\subset\PP(R_L)$ that meets $T_L$ in a single reduced point and is not tangent to $T_L$ is a smooth point of the hypersurface $\Sigma_L\subset\Gr(2,R_L)$ of lines meeting $T_L$.
    \item A projective line $M\subset\PP(R_L)$ that is a secant through two distinct points of $T_L$ is an ordinary node of $\Sigma_L$: near $M$ the locus $\Sigma_L$ is the transverse union of two smooth hypersurface branches, one for each of the two secant points.
    \item The locus of tangent lines to $T_L$ has codimension $3$ in $\Gr(2,R_L)$, and the locus of secant lines through two distinct points of $T_L$ has codimension $2$.
  \end{enumerate}
\end{lem}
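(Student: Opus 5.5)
I will prove the four parts of Lemma~\ref{lem:twisted-cubic} by working in coordinates $R_L\cong H^0(\PP^1,\cO(3))$, with $T_L$ the image of $\nu\colon[\alpha:\beta]\mapsto[(\alpha X+\beta Y)^3]$. I will also use the incidence correspondence
\[
  I=\{(q,M)\in T_L\times\Gr(2,R_L): q\in\PP(M)\}.
\]
Since $T_L\cong\PP^1$ and the lines through a fixed point of $\PP^3$ form a $\PP^2$, $I$ is a smooth $\PP^2$-bundle over $\PP^1$ of dimension $3$. Its image under the second projection is $\Sigma_L$, a hypersurface in the $4$-dimensional Grassmannian.

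For (1), I argue by contradiction. Three distinct points $[\ell_1^3],[\ell_2^3],[\ell_3^3]$ with pairwise non-proportional $\ell_i$ would be linearly dependent in $R_L$. But pairwise independent linear forms have linearly independent cubes; this is a Vandermonde determinant in the roots. The same Vandermonde argument shows that any $4$ distinct points of $T_L$ span $\PP^3$. This fact is reused below.

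For (4), I count dimensions. Secant lines through two distinct points are parametrized by the open part of $\mathrm{Sym}^2 T_L$, which has dimension $2$. The map to $\Gr(2,R_L)$ is injective by (1), so the secant locus has dimension $2$ and codimension $2$. Tangent lines are parametrized by $T_L$ itself, giving dimension $1$ and codimension $3$. Injectivity here follows because two distinct tangent lines cannot meet: otherwise the two double points would lie in a plane, contradicting the span statement for a degenerate $4$-point configuration. Alternatively, a tangent line and its point together determine the point.

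For (2) and (3), I analyze the map $\mu\colon I\to\Gr(2,R_L)$. Its fiber over $M$ is $\PP(M)\cap T_L$, which is finite by (1), since no line lies on $T_L$. Hence $\mu$ is birational onto $\Sigma_L$, and over a line meeting $T_L$ in a single reduced non-tangent point it is a bijection. The key step is to show that $d\mu$ is injective at $(q,M)$ exactly when $\PP(M)$ is not tangent to $T_L$ at $q$. Then $\mu$ is an immersion at such points, so:
\begin{itemize}
  \item in case (2), $\Sigma_L$ is locally the image of a single immersed smooth $3$-fold, hence smooth at $M$;
  \item in case (3), $\Sigma_L$ is locally the union of the two smooth branches $\mu(U_{q_1})$ and $\mu(U_{q_2})$.
\end{itemize}

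To verify the differential claim, I use the tangent space $T_M\Gr=\mathrm{Hom}(M,R_L/M)$. The branch through $q$ has tangent space
\[
  \{\varphi:\varphi(\hat q)\in \langle T_qT_L\rangle+M\bmod M\}.
\]
When $\PP(M)$ is not tangent at $q$, this is a hyperplane in $\mathrm{Hom}(M,R_L/M)$, which gives a smooth hypersurface branch.

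Transversality in (3) is the main obstacle. I must show that the two hyperplanes, for $q_1$ and $q_2$, are distinct. Take a basis $\hat q_1,\hat q_2$ of $M$. The condition for $q_i$ is that $\varphi(\hat q_i)$ lies in the line spanned by the image of the tangent vector $t_i$ at $q_i$ in $R_L/M$. These are conditions on the two different basis vectors. They are independent provided each $t_i\notin M$, which holds because the secant is not tangent at either point. With a secant fixed by $\mathrm{PGL}_2$-homogeneity, e.g.\ $M=\langle X^3,Y^3\rangle$, this is explicit: $t_1\sim X^2Y$ and $t_2\sim XY^2$ are not in $M$. So the branches are transverse and $M$ is an ordinary node.
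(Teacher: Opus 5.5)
Your proposal is correct and follows essentially the same route as the paper: linear independence of cubes of pairwise independent linear forms for (1), the incidence $\{(q,M):q\in\PP(M)\cap T_L\}$ projecting to $\Gr(2,R_L)$ with one smooth sheet per intersection point for (2)--(3), and parameter counts for (4). Your explicit description of each sheet's tangent hyperplane in $\mathrm{Hom}(M,R_L/M)$ makes precise the independence of the two sheets, which the paper only asserts; the point is that the conditions $\varphi(\hat q_i)\in\langle\bar t_i\rangle$ are imposed on different basis vectors.
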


\begin{proof}
  We prove (1). Suppose three distinct points $[\ell_1^3],[\ell_2^3],[\ell_3^3]$ of $T_L$ were collinear. After a change of coordinate on $L\cong\PP^1$ we may take $\ell_1=X$, $\ell_2=Y$, $\ell_3=X+Y$, so the three binary cubics are $X^3$, $Y^3$, and $(X+Y)^3$. A linear relation $aX^3+bY^3+c(X+Y)^3=0$ forces, by inspecting the coefficients of $X^2Y$ and $XY^2$ in $(X+Y)^3=X^3+3X^2Y+3XY^2+Y^3$, that $c=0$, and then $a=b=0$. Thus $X^3,Y^3,(X+Y)^3$ are linearly independent in $R_L$, so no projective line contains all three points. This proves (1).

  The local statements (2) and (3) are the standard analytic description of the secant variety of a twisted cubic; we recall the argument. The hypersurface $\Sigma_L\subset\Gr(2,R_L)$ of lines meeting $T_L$ is the image of the projection from the universal secant incidence
  \[
    \{(M,x):x\in M\cap T_L\}\subset\Gr(2,R_L)\times T_L.
  \]
  At a line $M$ meeting $T_L$ in a single reduced point $x$ at which $M$ is not tangent, this incidence is smooth and maps isomorphically to its image near $M$; the condition of meeting $T_L$ near $x$ is a single smooth equation, and $\Sigma_L$ is smooth at $M$, proving (2). At a secant $M$ through two distinct points $x_1\neq x_2$, the incidence has two distinct smooth sheets near $M$, one over each $x_i$; each sheet is locally the smooth hypersurface ``$M$ continues to meet $T_L$ near $x_i$''. The two sheets have distinct tangent hyperplanes in $\Gr(2,R_L)$ because the conditions of passing near two distinct points of the twisted cubic are independent. Hence near $M$ the locus $\Sigma_L$ is the transverse union of two smooth branches, an ordinary node, proving (3).

  For (4), the tangent lines to $T_L$ form a one-parameter family, parametrized by the point of tangency, so the tangent locus has dimension $1$ in $\Gr(2,R_L)$, which has dimension $4$; hence its codimension is $3$. The secant lines through two distinct points of $T_L$ are parametrized by the unordered pairs of distinct points of $T_L$, a two-dimensional family, so the secant locus has dimension $2$, hence codimension $2$. This proves (4).
\end{proof}

\section{The flex-line curve of a general pencil}\label{sec:flex-line-curve}

In this section, we prove that for a general pencil of plane cubics the flex-line curve is irreducible and has only ordinary nodes, each corresponding to a common flex line. Throughout, $V$, $G$, $G^\circ$, $B$, $R$, $T_L$, $W_L$, and $\sigma_W$ are as in Section~\ref{sec:preliminaries}.

\subsection*{Irreducibility}

\begin{lem}\label{lem:irreducible}
  For a general pencil $\cP=\PP(W)$ of plane cubics, the flex-line curve $F_{\cP,L}$ is irreducible.
\end{lem}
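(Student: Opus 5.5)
We prove irreducibility through a monodromy argument for the flex incidence curve $\Gamma_W$. Since $F_{\cP,L}$ is the reduced closure of $\lambda_W(\Gamma_W)$, it suffices to show that the closure of $\Gamma_W^\circ$ is irreducible for general $W$. We regard $\Gamma_W^\circ$ as a cover of the pencil $\PP(W)\cong\PP^1$, sending $(t,L,p)$ to $t$. Over the open set of smooth members, the fiber is the set of nine flexes of $C_t$, each with its tangent line. So $\Gamma_W^\circ$ restricted to the smooth members is a finite étale cover of degree $9$ of the complement of the $12$ nodal members. The curve $\Gamma_W^\circ$ is then irreducible as soon as the monodromy of this cover acts transitively on the nine flexes of a fixed smooth member $C_{t_0}$.

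To get transitivity, we identify the flexes of $C_{t_0}$ with the $3$-torsion subgroup $E[3]$ of the elliptic curve $E=C_{t_0}$. We choose one flex as the origin, which we may do after passing to the monodromy of the group structure. The monodromy of a general pencil acts on $H_1(E,\mathbb Z)$ through $SL_2(\mathbb Z)$. Indeed, each of the $12$ nodal fibres contributes a Picard--Lefschetz transvection, and the monodromy group is the full $SL_2(\mathbb Z)$, as for a rational elliptic surface with $12$ $I_1$ fibres. The affine action on flexes is $E[3]\rtimes SL_2(\mathbb Z/3)$-type: the flex locus is a torsor under $E[3]$, and monodromy acts by affine transformations with linear part the image in $SL_2(\mathbb F_3)$.

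We get transitivity on the nine points in one of two ways. One option is to show the translation part is nontrivial and combine it with the transitivity of $SL_2(\mathbb F_3)$ on nonzero vectors. The other, more robust, option uses degenerations: near a nodal fibre, exactly three of the nine flexes tend to the node, or equivalently the nodal cubic keeps only $3$ smooth flexes. The local monodromy, a transvection $T$ of order $3$ on $E[3]$, then permutes the remaining flexes in nontrivial $3$-cycles. Acting as $x\mapsto x+\langle x,\delta\rangle\delta$ on the torsor, with the vanishing cycles $\delta_i$ spanning $H_1$, the transvections move any flex to any other. This uses only that the vanishing cycles of the nodal fibres generate $H_1(E,\mathbb Z)$, which holds because the elliptic surface obtained by blowing up the $9$ base points is simply connected.

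The main obstacle is making this rigorous without heavy machinery. An alternative we would keep in reserve is a parameter-count argument that avoids monodromy. Consider the universal incidence
\[
  \mathcal I=\{(W,t,L,p): [\rho_L(C_t)]=[\ell_p^3]\}\subset G\times\ldots,
\]
and project it to $(t\text{-member},L,p)$. Pairs of a cubic together with a flex datum form an irreducible variety, being a projective-bundle-type fibration over $\Psi$ with linear fibres. Pencils through a given cubic form an irreducible family, so $\mathcal I$ is irreducible. The real difficulty is deducing irreducibility of the general fibre over $G$, which requires a connectedness or Stein-factorization statement. That is exactly where the monodromy computation, or a single degeneration to a special pencil with a visibly irreducible $\Gamma_W$, is needed. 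Finally, $\lambda_W$ maps the irreducible $\overline{\Gamma_W^\circ}$ onto $F_{\cP,L}$, which is therefore irreducible.
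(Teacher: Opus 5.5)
Your proposal has a genuine gap: the monodromy argument never establishes transitivity, and the fallback route is left open at the one step that matters.

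\textbf{The monodromy route.} Reducing irreducibility of $\overline{\Gamma_W^\circ}$ to transitivity of monodromy on the nine flexes of a smooth member is fine. Granting your appeal to simple connectivity of the elliptic surface, the vanishing cycles also force the linear part of the monodromy to be $SL_2(\mathbb F_3)$. But this linear part does not determine transitivity on the flex torsor.

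A subgroup $A\subset\mathbb F_3^2\rtimes SL_2(\mathbb F_3)$ with full linear part either contains all translations, and is then transitive, or meets them trivially. In the second case the lift of the central element $-I$ is $x\mapsto -x+b$. It has a unique fixed point, which all of $A$ must fix, so the orbits have sizes $1$ and $8$.

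Your formula $x\mapsto x+\langle x,\delta\rangle\delta$ is linear, so every map you write down fixes the chosen origin. As written, your argument lands in the non-transitive case. Choosing a flex as origin is exactly the step that is not monodromy-equivariant.

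What must be shown is that the constants $c_i$ in the true affine local monodromies $x\mapsto x+(\langle x,\delta_i\rangle+c_i)\delta_i$ cannot all be normalized away. Equivalently, there is no monodromy-invariant flex. Neither of your two options proves this; ``the translation part is nontrivial'' is simply asserted.

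Your local picture is also miscounted. Near a nodal member, six flexes tend to the node and form two $3$-cycles of the local monodromy. The three smooth flexes of the nodal cubic are the ones fixed, consistent with an order-$3$ affine transvection fixing an affine line.

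The gap can be closed. Suppose some flex were monodromy-invariant. It would give a section $\sigma=aH-\sum b_iE_i$ of $S=\mathrm{Bl}_9\PP^2\to\PP^1$ with $3\sigma-H$ trivial on the generic fibre. Since all fibres are irreducible,
\[
3\sigma\sim H+k\Bigl(3H-\sum E_i\Bigr),
\]
and comparing $H$-coefficients gives $3a=1+3k$, which is impossible.

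\textbf{The reserve route.} Here you set up essentially the paper's incidence but stop at the decisive point. The paper takes the smooth-flex incidence $X^\circ$, which is open in a $\PP^6$-bundle over $\Psi$ and so irreducible of dimension $9$. It notes that the fibre of the universal family over $W$ is the preimage of the line $\PP(W)$ under $X^\circ\to\PP(V)\cong\PP^9$. This map is dominant, since every smooth cubic has flexes.

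The Bertini theorem for irreducibility \cite[Theorem~3.3.1]{Laz04} states that the preimage of a general linear subspace of codimension $8<9$ under a morphism from an irreducible variety is irreducible. This gives irreducibility of $\Gamma_W^\circ$ for general $W$ directly, with no monodromy, Picard--Lefschetz theory, or degeneration. That theorem is the missing ingredient at the step you flag as ``the real difficulty''.
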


\begin{proof}
  Consider the smooth-flex incidence variety
  \[
    X^\circ=\{(L,p,[s]):L\in B,\ p\in L,\ [s]\in\PP(V),\ I_p(L,\{s=0\})=3,\ p\text{ smooth on }\{s=0\}\}.
  \]
  The pairs $(L,p)$ with $p\in L$ form a smooth threefold; over each such pair, the condition $I_p(L,\{s=0\})\ge 3$ cuts out a linear subspace of codimension $3$ in $V$ (the vanishing of the coefficients of $1$, $x$, $x^2$ in $s|_L$, for a coordinate $x$ on $L$ centered at $p$). Hence $X^\circ$ is an open subset of a $\PP^6$-bundle over a smooth threefold, so it is smooth and irreducible of dimension $9$. The universal incidence over the Grassmannian,
  \[
    \mathcal X^\circ=\{(W,L,p,[s])\in G\times X^\circ:[s]\in\PP(W)\},
  \]
  fibers over $X^\circ$ with fibers $\PP(V/\CC s)$ of dimension $8$, so $\mathcal X^\circ$ is irreducible.

  Let $\pi\colon\mathcal X^\circ\to G$ and $\tau\colon\mathcal X^\circ\to B$ be the two projections. The fiber of $\pi$ over $W\in G^\circ$ is the flex incidence curve curve $\Gamma_W^\circ$, and $\tau$ restricted to that fiber is the tangent-line map onto a dense subset of $F_{\cP,L}$. To see that the generic fiber of $\pi$ is irreducible, observe that $\mathcal X^\circ$ is irreducible and dominates $G$: a general pencil contains a smooth cubic, which has nine flexes, so the flex incidence curve curve is nonempty.
  The fibre of $ \mathcal{X}^\circ \to G $ at $ W $ is the inverse image of corrosponding projective line  $ \mathbb{P}(W) $ under $ \mathcal{X}^\circ \to \mathbb{P}(V) $. Therefore by \cite[Theorem~3.3.1]{Laz04}, the fiber of $\pi$ over a general point of $G$ is irreducible.
  The image of the fibre in $ B $ is dense in $F_{\cP,L}$, so $F_{\cP,L}$ is irreducible as well.
\end{proof}

\subsection*{Local structure}

We now establish the precise local form of the singularities. The next lemma combines a dimension count over the parameter space of pencils with the explicit local computations of the tangent-line map.

\begin{lem}\label{lem:transversality}
  For a general two-dimensional subspace $W\subset V$, the following hold.
  \begin{enumerate}
    \item $W$ contains no nonzero cubic divisible by a line, so $W\in G^\circ$.
    \item The tangent-line map from the flex incidence curve curve to $B$ is unramified at every smooth flex datum.
    \item If two smooth flex data of $\PP(W)$ share a tangent line $L$, then their flex points on $L$ are distinct, no third smooth flex datum has tangent line $L$, and the two local image branches in $B$ meet transversely at $L$.
  \end{enumerate}
\end{lem}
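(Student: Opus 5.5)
Item (1) was already shown when $G^\circ$ was introduced. The cubics containing a line component form a subvariety of codimension $2$ in $\PP(V)$, so a general projective line $\PP(W)$ misses it. For (2) and (3) we work with the section $\sigma_W\colon B\to\Gr_B(2,R)$ and the relative loci inside the $8$-dimensional relative Grassmannian $\Gr_B(2,R)$:
\begin{itemize}
  \item $\Sigma\subset\Gr_B(2,R)$, the divisor whose fibre over $L$ is $\Sigma_L$;
  \item $\mathcal T$, the relative tangent-line locus, of codimension $3$;
  \item $\mathcal S$, the relative secant locus, of codimension $2$, by Lemma~\ref{lem:twisted-cubic}(4).
\end{itemize}
By Lemma~\ref{lem:flex-meets-twisted}, $F_{\cP,L}$ is, up to the points coming from singular points of members, the preimage $\sigma_W^{-1}(\Sigma)$.

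The key step is a Kleiman--Bertini type transversality statement. Consider the evaluation map
\[
  \Phi\colon G^\circ\times B\to\Gr_B(2,R),\qquad (W,L)\mapsto (L,W_L).
\]
Since $\rho_L$ is surjective with kernel $B_L$, for fixed $L$ the map $W\mapsto W_L$ from the open subset of $\Gr(2,V)$ with $W\cap B_L=0$ onto $\Gr(2,R_L)$ is a smooth surjection: it is a bundle with affine-space fibres $\mathrm{Hom}(W_L,B_L)$. Hence $\Phi$ is a submersion.

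Generic smoothness applied to the fibre product of $\Phi$ with each stratum then gives the following for general $W$. The map $\sigma_W$ is transverse to every stratum: the smooth part of $\Sigma$, the two branches of $\Sigma$ along $\mathcal S$, $\mathcal S$ itself, and $\mathcal T$. Since $\mathcal T$ has codimension $3>\dim B=2$, $\sigma_W(B)$ misses $\mathcal T$. Also, $\sigma_W^{-1}(\mathcal S)$ is a finite set of points. At each such point, the two branch hypersurfaces of Lemma~\ref{lem:twisted-cubic}(3), together with $\mathcal S$ as their intersection, pull back transversally. So their pullbacks are two smooth curves meeting transversely.

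The flex data are then translated back.
\begin{itemize}
  \item A smooth flex datum with tangent line $L$ is a point of $\PP(W_L)\cap T_L$. Since $\sigma_W(L)\notin\mathcal T$, this intersection point is reduced and non-tangent. By Lemma~\ref{lem:twisted-cubic}(2) and transversality, $\sigma_W^{-1}(\Sigma)$ is a smooth curve there, and the incidence curve maps isomorphically onto it locally. The incidence curve is locally the pullback of the smooth incidence sheet from the proof of Lemma~\ref{lem:twisted-cubic}. This proves (2).
  \item If two smooth flex data share $L$, then $\PP(W_L)$ meets $T_L$ in two points. These are distinct points of $T_L$, because coincident points would give tangency. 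Distinct points of $T_L$ correspond to distinct linear forms, hence distinct flex points on $L$. Since $\PP(W_L)\cap T_L$ is then two distinct points, they correspond to distinct members.
  \item Lemma~\ref{lem:twisted-cubic}(1) rules out a third flex datum.
  \item Transversality to the nodal structure of Lemma~\ref{lem:twisted-cubic}(3) gives transverse branches. This proves (3).
\end{itemize}

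The main obstacle is making the transversality rigorous along the singular strata of $\Sigma$. One must check that each analytic branch of $\Sigma$ near $\mathcal S$ is a smooth hypersurface in $\Gr_B(2,R)$, smooth also in the $B$-direction. Then one applies generic smoothness to $\Phi^{-1}$ of each branch separately. I would do this using the incidence $\{(L,M,x):x\in\PP(M)\cap T_L\}$, which is smooth over $B$. Its two sheets near a secant line are smooth, and by Lemma~\ref{lem:twisted-cubic}(3) they are transverse already in each fibre over $B$.

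A secondary point is that a branch of $\sigma_W^{-1}(\Sigma)$ might come from a singular point of a member, since the relevant triple root could occur at a singular point. This must be kept separate from the smooth flex data; for (2) and (3), however, only smooth data are involved. For this I would use a dimension count: singular points of members of a general pencil are nodes lying on finitely many members. Hence only finitely many lines through them arise, and these do not affect the local statements at smooth data.
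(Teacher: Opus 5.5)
Your argument is correct, but it is not the paper's proof of this lemma. It is essentially the parametric-transversality argument the paper uses only afterwards, in Lemma~\ref{lem:local-structure}.

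\textbf{Part (2).} The paper argues by hand. With $L=\{y=0\}$ and $C$ locally $y=x^3+\cdots$, it finds that the moving flex tangent $y=mx+c$ has $(\dot m,\dot c)=(-g'(0),-g(0))$, where $g$ is the restriction of the second generator to $L$. So ramification forces $W_L=B_2(p)$. Such $W$ form a locus of dimension at most $3+12=15<16$. Since $\PP(B_2(p))$ is exactly the tangent line to $T_L$ at $[\ell_p^3]$, this is your condition $\sigma_W(L)\in\mathcal T$ with the same count $\codim\mathcal T=3>\dim B$. So for (2) the two routes agree in substance: the paper's is more explicit, and yours avoids the coordinate bookkeeping.

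\textbf{Distinctness and no third datum in (3).} These are handled the same way in both. Distinctness is just bijectivity of $\PP(W)\to\PP(W_L)$: two different flex data cannot give the same point of $T_L$. Your reason, ``coincident points would give tangency'', is not the right one.

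\textbf{Transversality in (3).} This is where the routes genuinely differ, and yours is the one that works. The paper computes explicit branch vectors $(-3a^2,a^3)$ and $(-3a^2,-a^3)$ and asserts transversality for every pencil with a common flex line. But normalizing $C$ to $y=x^3+\cdots$ by a unit also divides the second generator by that unit, so the derivative of the unit along $L$ enters $g'(0)$. Hence the branch directions depend on first-order data of $W$ off $L$, not only on $W_L$, and they can coincide.

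For example, take the pencil spanned by $x^3+y+y^2P_1$ and $(x-1)^3+\tfrac12 y(3x-1)+y^2P_2$, with $P_i$ of degree at most one; general such $P_i$ keep the pencil in $G^\circ$. The line $y=0$ is a common flex line with flex points $(0,0)$ and $(1,0)$. Both branches there have direction $(\dot m:\dot c)=(-3:1)$ in the chart $y=mx+c$.

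So transversality is a genuine general-position condition. Your route proves it: $\Phi$ is a submersion, and generic smoothness applied to the smooth incidences $\{(L,M,x)\}$ and $\{(L,M,x_1,x_2)\}$ gives transversality to both branches and to $\mathcal S$.

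\textbf{Two small slips.} First, $\Gr_B(2,R)$ has dimension $2+4=6$, not $8$; only codimensions enter, so this is harmless. Second, in your first translation bullet, in the secant case it is the pullback of the relevant branch of $\Sigma$ that is a smooth curve, not $\sigma_W^{-1}(\Sigma)$.
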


\begin{proof}
  Statement (1) was established in Section~\ref{sec:preliminaries}: the line-component locus has codimension $2$ in $\PP(V)$, so a general $W$ lies in $G^\circ$. Fix $W\in G^\circ$, so that $\rho_L$ maps $W$ isomorphically onto $W_L\subset R_L$ for every $L$.

  For (2), fix a smooth flex datum $(L,p,C)$. Choose affine coordinates near $p$ with $p=(0,0)$ and $L=\{y=0\}$, in which $C$ has local equation $y=x^3+(\text{terms of order}\ge 4)$ after multiplying by a unit. Let $G_0$ be a cubic form spanning $W$ together with $C$, representing a first-order direction of the pencil at $C$, and put $g(x)=G_0(x,0)$. To first order in a parameter $t$, the nearby member of the pencil has local graph
  \[
    y=x^3-t\,g(x)+(\text{terms divisible by }t^2).
  \]
  If $s(t)$ denotes the $x$-coordinate of the nearby flex point, the inflection condition gives $6s(t)-t\,g''(s(t))+O(t^2)=0$, so $s(0)=0$. The nearby flex tangent line has slope-derivative $-g'(0)$ and intercept-derivative $-g(0)$ in $t$. Hence the differential of the tangent-line map at $(L,p,C)$ vanishes only if $g(0)=g'(0)=0$, that is, only if the first-order direction $G_0$ restricts to $L$ with vanishing order at least $2$ at $p$. Since $C$ itself restricts to $L$ with vanishing order $3$ at $p$, this condition forces $W$ into the subspace $A_2(L,p)=\rho_L^{-1}(B_2(p))$, where $B_2(p)\subset R_L$ is the two-dimensional space of binary cubics vanishing to order at least $2$ at $p$. For a fixed pair $(L,p)$, the set of such $W$ is $\Gr(2,A_2(L,p))$, of dimension $2(8-2)=12$. Letting $(L,p)$ vary over its three-dimensional parameter space, the union of these loci has dimension at most $15<16=\dim G$, so a general $W$ avoids it. This proves (2).

  For (3), fix $L$. By Lemma~\ref{lem:flex-meets-twisted}, smooth flex data of $\PP(W)$ over $L$ correspond to points $p\in L$ for which the projective line $\PP(W_L)$ contains the point $[\ell_p^3]\in T_L$, where $\ell_p$ is a linear form vanishing at $p$. By Lemma~\ref{lem:twisted-cubic}(1), $\PP(W_L)$ contains at most two distinct points of $T_L$, so no line $L$ carries three smooth flex data. If two smooth flex data over $L$ had the same point $p$ but came from two distinct cubics in $W$, then $\rho_L(W)$ would lie in the one-dimensional space $B_3(p)$ of binary cubics with a triple root at $p$, contradicting $\dim W_L=2$. Hence two distinct flex data over $L$ have distinct flex points $p_1\neq p_2$.

  It remains to prove transversality of the two image branches. Choose a coordinate $x$ on $L$ with $p_1$ at $x=0$ and $p_2$ at $x=a$, $a\neq 0$. The spaces $B_3(p_1)$ and $B_3(p_2)$ are spanned by $x^3$ and $(x-a)^3$ respectively. By the first-order computation in part (2), the tangent vector in $B$ of the branch through $p_1$ is proportional to
  \[
    \bigl(-\bigl((x-a)^3\bigr)'\big|_{x=0},\ -\bigl((x-a)^3\bigr)\big|_{x=0}\bigr)=(-3a^2,\,a^3),
  \]
  and the tangent vector of the branch through $p_2$ is proportional to
  \[
    \bigl(-(x^3)'\big|_{x=a},\ -(x^3)\big|_{x=a}\bigr)=(-3a^2,\,-a^3).
  \]
  These two vectors are not proportional, since $a\neq 0$. Hence the two image branches meet transversely. This proves (3).
\end{proof}

\begin{lem}[Local structure of the flex-line curve]\label{lem:local-structure}
  For a general pencil $\cP=\PP(W)$ of plane cubics, every singular point of $F_{\cP,L}$ is an ordinary node. The two branches over each node are smooth flex data $(L,p_1,C_1)$ and $(L,p_2,C_2)$ with the same line $L$, distinct flex points $p_1\neq p_2$, and distinct members $C_1\neq C_2$. Branches arising as limits at the nodes of singular cubic members of $\cP$ are smooth immersed branches of $F_{\cP,L}$ and do not contribute to $\Sing(F_{\cP,L})$. Consequently the singular points of $F_{\cP,L}$ are in bijection with the common flex lines of $\cP$.
\end{lem}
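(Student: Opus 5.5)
The plan is to cover $F_{\cP,L}$ by the images of local branches of the flex incidence curve $\Gamma_W$ under $\lambda_W$, and then read off the singularities from Lemma~\ref{lem:transversality}. Since $\lambda_W\colon\Gamma_W\to F_{\cP,L}$ is finite and birational onto its image (birationality follows from Lemma~\ref{lem:transversality}(3): a general line of $F_{\cP,L}$ carries only one flex datum), every point $L\in F_{\cP,L}$ has local analytic branches in bijection with the points of the normalization of $\Gamma_W$ lying over $L$. I would split these preimage points into two types: smooth flex data in $\Gamma_W^\circ$, and points of $\Gamma_W\setminus\Gamma_W^\circ$, i.e.\ data $(t,L,p)$ with $p$ a singular point of $C_t$.

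For the first type, I would first show that $\Gamma_W^\circ$ is smooth at every smooth flex datum. Over $(L,p)$ the flex condition is a rank-$3$ linear condition, as in the proof of Lemma~\ref{lem:irreducible}, and a standard genericity dimension count over $G$ gives smoothness of the fiber. Lemma~\ref{lem:transversality}(2) then makes each such branch immersed, hence a smooth branch of $F_{\cP,L}$.

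By Lemma~\ref{lem:transversality}(3), at most two smooth data lie over any $L$. When there are two, they have distinct flex points and meet transversally. Their members are also distinct: if $C_1=C_2$, the binary cubic $\rho_L(C_1)$ would have triple roots at both $p_1\neq p_2$, which is impossible in degree $3$. So two smooth data over $L$ give an ordinary node, and this is exactly a common flex line.

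For the second type, a general pencil has exactly $12$ nodal members and no cuspidal or worse members. At a node $q$ of $C_t$, the only lines with $I_q\ge3$ are the two nodal tangents. I would take a local model $xy+\varepsilon\,(\ldots)$ of the pencil near $q$, with the general base-point-free condition that the other generator does not vanish at $q$. From this model I would check that the flex data of nearby members converge to each nodal tangent along a single smooth branch of $\Gamma_W$ whose image in $B$ is immersed. This is the classical picture: the $6$ flexes lost at a node split $3+3$ onto the two tangents. I would also need, via a further dimension count, that for general $W$ no nodal tangent line is simultaneously a flex line of another datum, and that the two nodal tangents at the same node map to distinct points of $B$. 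These limit points are then smooth points of $F_{\cP,L}$.

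Combining the two types, every singular point is an ordinary node arising from two smooth data, and conversely every common flex line produces such a node. This gives the claimed bijection. I expect the main obstacle to be the nodal-member analysis: verifying that the limit branches are immersed and unibranch, and that there is no coincidence of these lines with other flex data. My first attempt there would be an explicit Puiseux computation in the normal form $y(x+\dots)+t\,(1+\dots)$, together with a codimension count in $G$ excluding the coincidences.
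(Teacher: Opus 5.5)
Your argument is correct, but it is organized differently from the paper's.

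You work upstairs on $\Gamma_W$. Finiteness and birationality of $\lambda_W$ (the latter correctly extracted from Lemma~\ref{lem:transversality}(3)) identify the branches of $F_{\cP,L}$ at $L$ with the points of $\Gamma_W$ over $L$. You then certify each branch separately: smooth flex data by generic smoothness and Lemma~\ref{lem:transversality}(2), pairs by Lemma~\ref{lem:transversality}(3), and nodal data by a local expansion.

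The paper works downstairs. It writes $F_{\cP,L}=\sigma_W^{-1}(\Sigma)$ for the section $L\mapsto W_L$ of $\Gr_B(2,R)$. It then reads off smooth points, ordinary nodes and the absence of worse points all at once, from the strata $\Sigma^{\mathrm{sm}},\Sigma^{\mathrm{sec}},\Sigma^{\mathrm{tan}}$ of Lemma~\ref{lem:twisted-cubic} together with parametric transversality through the submersion $E$. The dimension counts for singular contact points and coincidences are common to both proofs.

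Your route is more elementary, and it gives the normalization picture of Lemma~\ref{lem:normalization} directly. The paper's route is uniform in the type of contact point, and this pays off exactly where you expect trouble. Once coincidences are excluded, at a nodal tangent $L$ of a node $q$ the line $\PP(W_L)$ meets $T_L$ only at $[\ell_q^3]$. It is not tangent there, because the other generator does not vanish at $q$. So $\sigma_W(L)\in\Sigma^{\mathrm{sm}}$, and $F_{\cP,L}$ is smooth at $L$ with no Puiseux or Hessian computation at all. If you do expand, use affine coordinates rather than an analytic normal form, since flexes are not analytic invariants.

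Likewise, the paper obtains transversality at the nodes from genericity of $\sigma_W$ relative to $\Sigma^{\mathrm{sec}}$, whereas your route rests entirely on Lemma~\ref{lem:transversality}(3). The explicit tangent vectors in the proof of that item are not right as written, for two reasons. First, the second intercept is measured from $p_2$ instead of $p_1$. Second, solving $C_i+tC_j=0$ for $y$ divides by $\partial_yC_i$, which adds to the slope derivative a term proportional to $\partial_x\partial_yC_i(p_i)\,C_j(p_i)$ (in coordinates with $L=\{y=0\}$). With these corrections the two branches can be tangent for special members. So the transversality in (3) genuinely holds only for general $W$. If you rely on it, back it by a dimension count ($4+6+6-1=15<16$) or by the paper's transversality to $\Sigma^{\mathrm{sec}}$.
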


\begin{proof}
  Fix a general $W$, lying in $G^\circ$ and satisfying the conclusions of Lemma~\ref{lem:transversality}. Consider the section $\sigma_W\colon B\to\Gr_B(2,R)$, $\sigma_W(L)=W_L$, and the relative locus $\Sigma\subset\Gr_B(2,R)$ whose fiber over $L$ is the hypersurface $\Sigma_L$ of lines in $\PP(R_L)$ meeting $T_L$. By Lemma~\ref{lem:flex-meets-twisted}, away from the loci excluded below, $F_{\cP,L}=\sigma_W^{-1}(\Sigma)$.

  The fiberwise strata of Lemma~\ref{lem:twisted-cubic} assemble into relative locally closed strata $\Sigma^{\mathrm{sm}}$, $\Sigma^{\mathrm{sec}}$, and $\Sigma^{\mathrm{tan}}$ of $\Gr_B(2,R)$, parametrizing lines that meet $T_L$ at one reduced non-tangent point, at two distinct points, and tangentially, of relative codimensions $1$, $2$, and $3$ respectively. Consider the evaluation morphism
  \[
    E\colon G^\circ\times B\longrightarrow\Gr_B(2,R),\qquad (W,L)\longmapsto W_L.
  \]
  For fixed $L$, the restriction map $\rho_L\colon V\to R_L$ is surjective, so the infinitesimal variations of $W\subset V$ realize all infinitesimal variations of the image plane $W_L\subset R_L$; thus $E$ is submersive onto the relative Grassmann directions. By generic smoothness applied to $E^{-1}(Z)\to G^\circ$ for each smooth stratum $Z$, a general $W$ has $\sigma_W$ transverse to $\Sigma^{\mathrm{sm}}$ and $\Sigma^{\mathrm{sec}}$, and avoids $\Sigma^{\mathrm{tan}}$ altogether, because $\dim B=2<3=\codim\Sigma^{\mathrm{tan}}$.

  Consequently, for a general pencil, $F_{\cP,L}=\sigma_W^{-1}(\Sigma)$ is smooth away from the finite transverse inverse image $\sigma_W^{-1}(\Sigma^{\mathrm{sec}})$, and at each point of $\sigma_W^{-1}(\Sigma^{\mathrm{sec}})$ it has an ordinary node by Lemma~\ref{lem:twisted-cubic}(3). The tangent stratum is avoided, so there are no cusps or stationary-flex singularities; this is consistent with the vanishing of the hyperflex invariant for $d=3$ in \eqref{eq:cmr-hyperflex}. There are no triple coincidences by Lemma~\ref{lem:twisted-cubic}(1). The two image branches at a secant meet transversely by transversality to $\Sigma^{\mathrm{sec}}$, in accordance with Lemma~\ref{lem:transversality}(3).

  We next show that, for a general pencil, the two branches over each node are flexes at smooth points of two distinct members. Consider the bad locus where one of the two secant points of $\PP(W_L)\cap T_L$ arises from a cubic singular at its triple-root point. Such a configuration is given by a triple $(L,p,q)$ with $p,q\in L$, $p\neq q$, a cubic $C_1$ whose restriction to $L$ has a triple root at $p$ and which is singular at $p$, and a cubic $C_2$ whose restriction to $L$ has a triple root at $q$. The triple $(L,p,q)$ has dimension $4$; the class $[C_1]$ has dimension $5$, because being singular at $p$ together with the triple-root condition at $p$ imposes four independent linear conditions on $\PP(V)\cong\PP^9$; and $[C_2]$ has dimension $6$, the ordinary triple-root condition imposing three independent linear conditions. These data determine at most one pencil $\PP\langle C_1,C_2\rangle$, so their image in $G$ has dimension at most $4+5+6=15<16$. Hence a general pencil has no node of $F_{\cP,L}$ at which one branch comes from a singular point of its cubic; the case where both branches are singular is contained in the same proper closed locus.

  Next we account for the singular members of the pencil. The discriminant hypersurface in $\PP(V)$ has singular locus of codimension at least $2$ (cubics with worse than a single ordinary node), so a general pencil meets the discriminant transversely and only in nodal cubics. We must show that the flex limits at such nodes are smooth immersed branches that do not create singularities of $F_{\cP,L}$, and that they do not coincide with other branches. A transverse one-parameter smoothing of a node of a plane cubic can be written analytically as
  \[
    f_t(x,y)=xy+a x^3+b y^3+t=0,\qquad ab\neq 0,
  \]
  with homogenization $F_t(X,Y,Z)=XYZ+aX^3+bY^3+tZ^3$. Its Hessian determinant is
  \[
    \Hess(F_t)=216ab\,t\,XYZ-6aX^3-6bY^3+2XYZ-6tZ^3 .
  \]
  On the surface $f_t=0$ one has $t=-xy-ax^3-by^3$; substituting and discarding terms divisible by $x^2y^2$ gives
  \[
    \Hess(F_t)\big|_{f_t=0}=8xy+(\text{terms of order}\ge 4).
  \]
  In the blow-up chart $y=ux$, the pullback is $x^2u\bigl(8+(\text{terms vanishing at }x=0)\bigr)$. After removing the multiplicity-two exceptional contribution, the proper transform of the flex divisor is $\{u=0\}$, meeting the exceptional divisor $\{x=0\}$ transversely in the tangent direction $y=0$; the other chart gives the second transverse intersection at $x=0$. Along the first branch, a point $(x,0)$ maps under the tangent-line map to the line of affine direction $[f_x:f_y]=[3ax^2:x]$, whose first-order variation in $B$ is nonzero, and similarly for the second branch. Hence each nodal-boundary branch is a smooth immersed branch of $F_{\cP,L}$.

  It remains to exclude coincidences involving nodal-boundary branches. A coincidence between a nodal tangent line and a smooth flex datum of another member is parametrized by the node $p$, the tangent line $L\ni p$, the nodal cubic $C_1$, a smooth flex point $q\in L$, and a smooth-flex cubic $C_2$; here $L$ singular at $p$ with a prescribed nodal tangent direction along $L$ imposes, on cubics, four independent linear conditions, so $[C_1]$ moves in dimension $5$, while $[C_2]$ moves in dimension $6$. Counting $(p,L,q)$ as dimension $2+1+1=4$, the image in $G$ has dimension at most $4+5+6=15$. A coincidence between two nodal tangent data sharing a line is bounded similarly by $4 + 5 + 5 = 14$. Both images are proper closed subsets of $G$, so a general pencil avoids them.

  Removing all of the proper bad loci above, we obtain a nonempty open set of pencils for which: the normalization $\widetilde F\to F_{\cP,L}$ is generically one-to-one; every singular point of $F_{\cP,L}$ is an ordinary node coming from exactly two smooth flex data with a common tangent line at distinct points; and nodal-boundary branches are smooth, immersed, and isolated from all other branches. Finally, if the two smooth flex data over a node came from the same member $C$, then the line $L$ would meet $C$ with multiplicity at least $3$ at each of two distinct smooth points, hence with total intersection multiplicity at least $6$; this is impossible for a cubic $C$ not containing $L$, and members of $\cP$ contain no line component. Therefore the two branches over each node come from two distinct members $C_1\neq C_2$, so each node is a common flex line. Conversely, by Lemma~\ref{lem:flex-meets-twisted} a common flex line $L$ makes $\PP(W_L)$ a secant of $T_L$ through two distinct points; the generality conditions force it to lie in $\sigma_W^{-1}(\Sigma^{\mathrm{sec}})$, hence to be an ordinary node. Thus the singular points of $F_{\cP,L}$ are in bijection with the common flex lines of $\cP$.
\end{proof}

\section{Proof of the main theorem}\label{sec:proof}

We first record the numerical input from \cite[Propositions~1.1 and~1.2]{CMR26}, specialized to cubics.

For a general pencile, the flex incidence curve $ \Gamma_W $ is the normalization of the flex-line curve.

\begin{lem}\label{lem:normalization}
  For a general pencil $\cP=\PP(W)$, the curve $\Gamma_W$ is irreducible and
  normal, and
  \[
    \lambda_W\colon\Gamma_W\longrightarrow F_{\cP,L}
  \]
  is the normalization morphism.
\end{lem}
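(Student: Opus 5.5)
The strategy is to show that $\Gamma_W$ is a smooth irreducible curve and that $\lambda_W$ is finite and birational onto $F_{\cP,L}$. By the universal property of normalization, these two facts give the statement.

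\textbf{Step 1: smoothness of $\Gamma_W$.} Regard $\Gamma_W$ as the fiber of the closed incidence
\[
\mathcal X=\{(W,t,L,p):I_p(L,C_t)\ge 3\}
\]
over $W\in G$.

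On the open part $\Gamma_W^\circ$ I would use the description from Lemma~\ref{lem:irreducible}. The total space $\mathcal X^\circ$ is smooth, since it is an open subset of a projective bundle over the smooth variety $X^\circ$. Generic smoothness applied to $\pi\colon\mathcal X^\circ\to G$ then makes $\Gamma_W^\circ$ smooth for general $W$.

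The remaining points of $\Gamma_W$ are the triples $(t,L,p)$ with $p$ singular on $C_t$. For general $W$ these occur only at the finitely many nodal members, since the pencil meets the discriminant transversely in nodal cubics. At such a node $p$, every line $L$ through $p$ has $I_p\ge2$, and the flex condition $I_p\ge3$ picks out exactly the two nodal tangent lines.

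Smoothness of $\Gamma_W$ at these boundary points is the local computation already made in the proof of Lemma~\ref{lem:local-structure}. In the model $f_t=xy+ax^3+by^3+t$, the proper transform of the flex divisor, computed via the Hessian, is the smooth curve $\{u=0\}$ in the blow-up chart, and it meets the exceptional divisor transversely. I would identify a neighbourhood of $(t_0,L,p)$ in $\Gamma_W$ with a neighbourhood of that point on the proper transform, with $L$ read off as the limiting tangent direction. That identification gives smoothness there.

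Since a smooth curve is normal, it remains to get irreducibility of $\Gamma_W$. Lemma~\ref{lem:irreducible} gives irreducibility of $\Gamma_W^\circ$ for general $W$. I would then check that the boundary points lie in its closure, which holds because each boundary branch is a limit of smooth flex data.

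\textbf{Step 2: finiteness and birationality of $\lambda_W$.} The map $\lambda_W$ is proper, being the restriction of a projection from a projective variety. It is quasi-finite: by Lemma~\ref{lem:twisted-cubic}(1), each line $L$ carries at most two points of $\PP(W_L)\cap T_L$, and since $W\in G^\circ$ each such point determines $t$ and $p$. Hence $\lambda_W$ is finite.

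For birationality, Lemma~\ref{lem:local-structure} shows that only the finitely many nodes have two preimages. Every other point of $F_{\cP,L}$ has exactly one preimage, at which $\lambda_W$ is unramified, by Lemma~\ref{lem:transversality}(2) at smooth data and by the immersion statement at the nodal-boundary branches. So $\lambda_W$ is generically injective. In characteristic zero, together with generic unramifiedness, this makes $\lambda_W$ birational onto its reduced image.

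A finite birational morphism from a normal irreducible curve onto $F_{\cP,L}$ is the normalization morphism, which finishes the argument.

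The main obstacle is Step 1 at the singular-member points. The identification of $\Gamma_W$ near $(t_0,L,p)$ with the blown-up Hessian curve needs care, because near $p$ the scheme $\Gamma_W$ is defined by the contact conditions and not by the Hessian. I would first try to show directly that the three equations cutting out $\Gamma_W$ in $\PP(W)\times\Psi$ have independent differentials at $(t_0,L,p)$. These equations are: the vanishing of $C_t|_L$ to order $3$ at $p$, in local coordinates on $\Psi$ and $t$. The independence would use $\partial_t f_t=1\neq0$ together with the nondegeneracy $xy$ of the node along the tangent line $L=\{y=0\}$.
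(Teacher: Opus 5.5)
Your proposal is correct, and its overall architecture is the paper's: $\Gamma_W$ irreducible and normal; $\lambda_W$ proper with fibre over $L$ equal to $\PP(W_L)\cap T_L$, hence finite; $\lambda_W$ generically injective by Lemma~\ref{lem:local-structure}; hence it is the normalization. The difference is how you get normality.

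The paper cites Eisenbud--Harris for smoothness of the complete incidence curve of a general pencil. It obtains irreducibility by declaring $\Gamma_W$ to be the closure of $\Gamma_W^\circ$.

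You prove smoothness yourself. Generic smoothness of $\mathcal X^\circ\to G$ handles $\Gamma_W^\circ$. A Jacobian check handles the $24$ boundary points, where $p$ is the node of a nodal member and $L$ is a nodal tangent. Your fallback there does work. Write $f_t=f_0+tg$ with $f_0=xy+(\text{cubic})$ and $g(p)\neq 0$; the latter is exactly transversality of the pencil to the discriminant. Use local coordinates $(t,u,v,m)$ with $p=(u,v)$ and $L$ of slope $m$. Then the differentials of the three contact equations at the boundary point are
\[
  g(p)\,dt,\qquad dv+(\ast)\,dt,\qquad 2\,dm+(\ast)\,du+(\ast)\,dv+(\ast)\,dt,
\]
which are visibly independent. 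So go straight to this computation and drop the proposed identification with the blown-up Hessian curve; as you suspected, that identification is the delicate part and is not needed.

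What your route buys is self-containedness. It also explicitly shows that the boundary points are not isolated components, so that $\Gamma_W=\overline{\Gamma_W^\circ}$. The paper's ``hence so is its closure'' passes over this point; there it follows either from the cited smoothness or from the fact that every component of the zero locus of a section of a rank-three bundle on the fourfold $\PP(W)\times\Psi$ has dimension at least one. The paper's route is shorter, but it outsources the main content of the lemma to a reference.

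One minor point: in characteristic zero, generic injectivity of a finite map of integral curves already gives degree one. Your appeal to generic unramifiedness for birationality is therefore harmless but superfluous.
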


\begin{proof}
  The open subcurve $\Gamma_W^\circ$ is irreducible by
  Lemma~\ref{lem:irreducible}, hence so is its closure $\Gamma_W$. By the
  universal-flex construction of \cite[Section~11.3.2]{EH16}, $\Gamma_W$ is
  the zero locus of a section of a rank-three vector bundle on
  $\PP(W)\times\Psi$. Moreover,
  \cite[Section~11.3.3 and Exercises~11.33--11.36]{EH16} shows that, for a
  general pencil, this complete incidence curve is smooth. Thus $\Gamma_W$
  is normal and projective.

  Lemma~\ref{lem:local-structure} shows that $\lambda_W$ is generically
  one-to-one. It is finite: projectivity makes it proper, and its fiber over
  $L\in B$ is identified with the finite scheme
  $\PP(W_L)\cap T_L$, since the twisted cubic $T_L$ contains no line.
  Therefore $\lambda_W$ is finite and birational. Since its source is
  normal, it is the normalization morphism of $F_{\cP,L}$.
\end{proof}

Then we can compute the genera.

\begin{lem}\label{lem:genus}
  For a general pencil $\cP=\PP(W)$ of plane cubics,
  \[
    p_a(F_{\cP,L})=28,
    \qquad
    p_g(F_{\cP,L})=16.
  \]
\end{lem}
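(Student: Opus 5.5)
The arithmetic genus is immediate once the degree is known. By \eqref{eq:cmr-degree-genus} with $d=3$, $\deg F_{\cP,L}=3\cdot 3\cdot 1=9$. By Lemma~\ref{lem:irreducible}, $F_{\cP,L}$ is an irreducible reduced plane curve. The genus formula for plane curves then gives $p_a=\binom{9-1}{2}=28$. If I want to avoid quoting \cite{CMR26} for the degree, I would recompute it as $\deg F_{\cP,L}=\#(F_{\cP,L}\cap\ell)$ for a general line $\ell\subset B$, i.e.\ for a general pencil of lines through a point $q\in\PP^2$. This amounts to counting flex data $(t,L,p)$ with $q\in L$, which is a Chern class computation on $\PP(W)\times\Psi$ restricted to $L\ni q$. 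Since $\lambda_W$ is birational by Lemma~\ref{lem:normalization}, this count equals the degree. I would still cite $3d(d-2)=9$ as the primary route.

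For the geometric genus, Lemma~\ref{lem:normalization} identifies $\Gamma_W$ with the normalization. So $p_g(F_{\cP,L})=g(\Gamma_W)$, and this genus can be computed intrinsically in one of two ways.

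The first option is to use the projection $\Gamma_W\to\PP(W)\cong\PP^1$. It has degree $9$, since a general member has nine flexes. Riemann--Hurwitz then gives $2g-2=9\cdot(-2)+\deg\mathrm{Ram}$. The ramification comes from two sources. At each of the $12$ nodal members (the degree of the discriminant of plane cubics), the Hessian computation in Lemma~\ref{lem:local-structure} shows that six flexes survive near the node as simple ramification pairs. More precisely, the three flexes coalescing at the node come as branches $u=0$ and the other chart. I would determine the local ramification contribution per nodal member from the local model $f_t=xy+ax^3+by^3+t$. The flex points near the node satisfy $xy\approx 0$ on $f_t=0$, so $t\approx -ax^3$ along $y=0$. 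Hence each of the two branches is a triple cover of the $t$-disc, contributing ramification $2+2=4$ per nodal member. Since there are no hyperflexes for general cubic pencils (by \eqref{eq:cmr-hyperflex}), there is no other ramification. This gives $2g-2=-18+12\cdot4=30$, so $g=16$.

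The second option is adjunction on $\PP(W)\times\Psi$: $\Gamma_W$ is the smooth zero locus of a rank-three bundle, so $\deg K_{\Gamma_W}=(K_{\PP^1\times\Psi}+c_1)\cdot c_3$. I would use this as a cross-check.

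The main obstacle is the local ramification analysis at the nodal members. Getting the triple-cover structure right in the local model is the key step: the smoothness of the branch is already in Lemma~\ref{lem:local-structure}, but its degree over the $t$-line still has to be established. I would first double-check that $t=-ax^3+O(x^4)$ along the branch $\{u=0\}$. As a sanity check I would compare with the known genus $12d^2-39d+25=16$ from \eqref{eq:cmr-degree-genus}, which may simply be cited at $d=3$ as the statement does.
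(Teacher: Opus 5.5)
Your proposal is correct. For $p_a$ it is exactly the paper's argument, but for $p_g$ you take a different route. The paper identifies $p_g(F_{\cP,L})$ with $g(\Gamma_W)$ via Lemma~\ref{lem:normalization}, as you do, and then simply cites the genus $12d^2-39d+25$ of the flex incidence curve from \cite{EH16}. You instead compute $g(\Gamma_W)$ by Riemann--Hurwitz for $\Gamma_W\to\PP(W)$, and this works:
\begin{itemize}
\item The map has degree $9$.
\item It is unramified at smooth flex data. The fiber length there is $I_p(L,C_t)-2$, and a cubic without line components has no contact of order $4$ by B\'ezout, so you do not even need \eqref{eq:cmr-hyperflex}.
\item Over each of the $12$ nodal members, the fiber consists of the three smooth flexes together with the two points $(0,L_1,p)$, $(0,L_2,p)$ over the nodal tangents, each of index $3$. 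This gives $2g-2=-18+48=30$.
\end{itemize}
Your version explains the number $16$ from classical data: nine flexes, twelve singular members, six flexes absorbed by each node. It also reuses the local model of Lemma~\ref{lem:local-structure}. The paper's citation is shorter but treats the genus as a black box resting entirely on the external formula.

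Two corrections to the write-up.

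First, the sentences about six flexes surviving ``as simple ramification pairs'' and ``the three flexes coalescing at the node'' contradict your own triple-cover computation and should be removed. Six flexes tend to the node, three along each nodal tangent. Simple ramification would instead give $g=10$.

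Second, the step you call the main obstacle can be sidestepped. Because $\Gamma_W$ is smooth and the three smooth flexes of the nodal member are unramified, the indices at the two nodal-tangent points satisfy $e_1+e_2=9-3=6$. Their contribution is therefore $e_1+e_2-2=4$, regardless of how the six flexes split. This also frees you from the special normal form $xy+ax^3+by^3+t$, which corresponds to one particular second generator of the pencil. For a general smoothing $f_0+tg$ one still gets $t=cx^3+O(x^4)$ along each flex branch, with $c\neq 0$ precisely because $I_p(L_i,C_0)=3$.
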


\begin{proof}
  By \eqref{eq:cmr-degree-genus}, or equivalently \cite[Proposition~1.2]{CMR26}, the flex-line curve has degree $9$. Hence
  \[
    p_a(F_{\cP,L})=\binom{9-1}{2}=28.
  \]
  On the other hand, the genus computation for the flex incidence curve in \cite[Section~11.3.2]{EH16} gives
  \[
    g(\Gamma_W)=12d^2-39d+25=16
    \qquad(d=3).
  \]
  Since $\Gamma_W$ is the normalization of $F_{\cP,L}$ by
  Lemma~\ref{lem:normalization}, this yields $p_g(F_{\cP,L})=16$.
\end{proof}

We also record that the hyperflex contribution vanishes in degree $3$.

\begin{lem}\label{lem:numerical}
  No member of a general pencil of plane cubics has a flex line of contact order at least $4$.
\end{lem}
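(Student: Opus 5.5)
We will prove the statement directly, without relying on the formula \eqref{eq:cmr-hyperflex} (although it would suffice to note that $6(d-3)(3d-2)=0$ for $d=3$). The basic fact is that a plane cubic $C$ and a line $L$ not contained in $C$ satisfy $C\cdot L=3$ by B\'ezout. Hence $I_p(L,C)\ge 4$ at some point $p$ forces $L\subset C$; in other words, $C$ would contain a line component.

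We organize the argument by the role of the point $p$, and we interpret contact order in the sense of $\cH_{(4),d}$: the line $L$ restricts to $C$ as a binary cubic vanishing to order $\ge 4$ at $p$. The case analysis runs as follows.

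First, if $p$ is a smooth point of $C$ with $I_p(L,C)\ge 4$, the restriction $\rho_L(C)\in R_L$ is a binary cubic vanishing to order at least $4$ at $p$. Such a binary cubic is zero, so $C\in\ker\rho_L=B_L$, and $C$ is divisible by the equation of $L$.

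Second, if $p$ is singular on $C$, the same conclusion holds for any line through $p$ with $I_p\ge 4$. In either case the member $C$ contains a line component.

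It remains to see that a general pencil has no such member. By Lemma~\ref{lem:transversality}(1), a general pencil lies in $G^\circ$, which is nonempty and open. The reason is the dimension count of Section~\ref{sec:preliminaries}: reducible cubics with a line component form a $7$-dimensional locus in $\PP(V)\cong\PP^9$, of codimension $2$, so a general line $\PP(W)$ misses it. Consequently no member contains a line, and by the first step no member has a line of contact order $\ge 4$.

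The only point needing care is the identification of contact order with the vanishing order of $\rho_L(C)$, including when $L\subset C$. That identification is Lemma~\ref{lem:flex-meets-twisted}, which relies on the injectivity of $\rho_L|_W$ for $W\in G^\circ$. I do not expect a genuine obstacle here.
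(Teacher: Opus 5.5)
Your argument is correct, but it takes a different route from the paper. The paper's proof is a one-line citation: the Ciliberto--Miranda--Ro\'e hyperflex invariant $6(d-3)(3d-2)$ vanishes at $d=3$.

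You instead argue directly, in two steps:
\begin{enumerate}
\item For a line $L\not\subset C$, B\'ezout gives $\sum_{p\in L} I_p(L,C)=3$. So contact order $\ge 4$ at any point forces $L\subset C$. Equivalently, $\rho_L(C)$ is a binary cubic with a root of order $\ge 4$, hence zero, so $C\in B_L$.
\item A general pencil lies in $G^\circ$, because cubics with a line component form a codimension-$2$ locus in $\PP(V)$.
\end{enumerate}

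The citation keeps the degree-$3$ case inside the uniform all-$d$ framework of Ciliberto--Miranda--Ro\'e. Your route buys three things. It is self-contained. It gives the sharper statement that the conclusion holds for \emph{every} $W\in G^\circ$, so the lemma already follows from the standing assumption $W\in G^\circ$ used throughout Sections~\ref{sec:preliminaries}--\ref{sec:flex-line-curve}. And it makes explicit why the invariant vanishes: in degree $3$ the contact-order-$4$ locus is exactly the codimension-$2$ locus of cubics containing a line. That locus has no divisorial part, so its degree is zero and a general line $\PP(W)$ misses it. This is the geometric content that the paper's ``degree zero, hence avoided'' step uses implicitly.

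Two minor points. Your split into smooth and singular $p$ is unnecessary, since B\'ezout handles both cases at once. Also, the identification of $I_p(L,C)$ with the vanishing order of $C|_L$ at $p$ holds for any cubic not containing $L$. It does not depend on injectivity of $\rho_L|_W$, so your closing remark misattributes what Lemma~\ref{lem:flex-meets-twisted} needs here.
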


\begin{proof}
  By \eqref{eq:cmr-hyperflex}, the hyperflex invariant is $6(d-3)(3d-2)$ \cite[Proposition~1.1]{CMR26}, which vanishes when $d=3$.
\end{proof}

Finally we show the main theorem.

\begin{proof}[Proof of Theorem~\ref{thm:main}]
  Let $\cP=\PP(W)$ be a general pencil of plane cubics, general enough that the conclusions of Lemmas~\ref{lem:irreducible}, \ref{lem:local-structure}, and \ref{lem:numerical} all hold simultaneously; this is possible since each conclusion holds on a nonempty Zariski open subset of $G$, and a finite intersection of such subsets is again nonempty and open.

  By Lemma~\ref{lem:irreducible}, $F=F_{\cP,L}$ is an irreducible plane curve, and by Lemma~\ref{lem:numerical} it has degree $9$. Therefore its arithmetic genus is
  \begin{equation}\label{eq:pa}
    p_a(F)=\binom{9-1}{2}=\binom{8}{2}=28 .
  \end{equation}
  By Lemma~\ref{lem:normalization}, $\lambda_W\colon\Gamma_W\longrightarrow F.$ is the normalization. Thus, for $q\in\Sing(F)$, its local delta-invariant is
  \[
    \delta_q=
    \length_\CC\bigl((\lambda_{W*}\cO_{\Gamma_W}/\cO_F)_q\bigr).
  \]
  The short exact sequence
  \[
    0\longrightarrow\cO_F
    \longrightarrow\lambda_{W*}\cO_{\Gamma_W}
    \longrightarrow\lambda_{W*}\cO_{\Gamma_W}/\cO_F
    \longrightarrow 0
  \]
  yields, on taking Euler characteristics,
  \begin{equation}\label{eq:delta-sum}
    \sum_{q\in\Sing(F)}\delta_q=p_a(F)-p_g(F).
  \end{equation}
  By Lemma~\ref{lem:numerical}, $p_g(F)=16$, so combining \eqref{eq:pa} and \eqref{eq:delta-sum} gives
  \[
    \sum_{q\in\Sing(F)}\delta_q=28-16=12 .
  \]

  By Lemma~\ref{lem:local-structure}, every singular point of $F$ is an ordinary node, which has delta-invariant $1$, and the singular points of $F$ are in bijection with the common flex lines of $\cP$. Writing $N$ for the number of common flex lines, we therefore have
  \[
    N=\sum_{q\in\Sing(F)}\delta_q=12 .
  \]
  Hence a general pencil of plane cubics has exactly $12$ common flex lines.
\end{proof}

\end{document}